\documentclass[12pt]{article}
\usepackage{amsmath}
\usepackage{graphicx}
\usepackage{enumerate}
\usepackage{natbib}
\usepackage{url}

\addtolength{\oddsidemargin}{-.5in}%
\addtolength{\evensidemargin}{-1in}%
\addtolength{\textwidth}{1in}%
\addtolength{\textheight}{1.7in}%
\addtolength{\topmargin}{-1in}%

\usepackage{amsmath,amsfonts,amssymb,amsthm,mathrsfs,dsfont}
\usepackage[T1]{fontenc}
\usepackage{times}
\usepackage{bm}
\usepackage{natbib}
\usepackage{bbm} 
\usepackage{xcolor}
\usepackage{wrapfig}
\usepackage{tikz}
\usepackage{pgfplots}
\pgfplotsset{compat=1.13}

\newcommand{\ddr}{\mathrm{d}} 
\newcommand{\M}{\mathcal{M}} 
\newcommand{\W}{\mathcal{W}}
\newcommand{\R}{\mathbb{R}}

\newcommand{\sbf}{\mathbf{s}}

\newcommand{\crv}{\bm{\tilde \mu}}
\newcommand{\crm}{\tilde \mu}
\newcommand{\co}{^{\textup{co}}}

\def\simiid{\stackrel{\mbox{\scriptsize{\rm iid}}}{\sim}}

\newtheorem{theorem}{Theorem}
\newtheorem{proposition}[theorem]{Proposition}

\newtheorem{lemma}[theorem]{Lemma}
\newtheorem{definition}{Definition}
\theoremstyle{remark}
\newtheorem{remark}{Remark}

\begin{document}


  \title{\bf A Wasserstein index of dependence \\ for random measures}
  \author{Marta Catalano\\
    Department of Economics and Finance, Luiss University\\
    and \\
    Hugo Lavenant, Antonio Lijoi, Igor Pr\"unster\\
    Department of Decision Sciences and BIDSA, Bocconi University
    \date{}}   

  \maketitle

\bigskip
\begin{abstract}
Optimal transport and Wasserstein distances are flourishing in many scientific fields as a means for comparing and connecting random structures. Here we pioneer the use of an optimal transport distance between L\'{e}vy measures to solve a statistical problem. Dependent Bayesian nonparametric models provide flexible inference on distinct, yet related, groups of observations. Each component of a vector of random measures models a group of exchangeable observations, while their dependence regulates the borrowing of information across groups. We derive the first statistical index of dependence in $[0,1]$ for (completely) random measures that accounts for their whole infinite-dimensional distribution, which is assumed to be equal across different groups. This is accomplished by using the geometric properties of the Wasserstein distance to solve a max-min problem at the level of the underlying L\'{e}vy measures. The Wasserstein index of dependence sheds light on the models' deep structure and has desirable properties: (i) it is $0$ if and only if the random measures are independent; (ii) it is $1$ if and only if the random measures are completely dependent; (iii) it simultaneously quantifies the dependence of $d \ge 2$ random measures, avoiding the need for pairwise comparisons; (iv) it can be evaluated numerically. Moreover, the index allows for informed prior specifications and fair model comparisons for Bayesian nonparametric models.
\end{abstract}

\noindent%
{\it Keywords:}  Bayesian nonparametrics $|$ Index of dependence $|$ L\'evy measure $|$ Random measure $|$ Wasserstein distance.
%

\section{Introduction}
Complex phenomena often yield data from different but related sources, which are ideally suited to Bayesian modeling because of its inherent borrowing of information. In a nonparametric setting this is regulated by the dependence between random measures, which provide the main building block for many dependent priors. This is witnessed by the multitude of contributions in the literature; see \citet{MacEachern1999,MacEachern2000} for pioneering ideas and  \citet{Quintana2022} for a recent  review. The unknown distribution $\tilde P_i$ of each group of observations $\bm{X}_i = (X_{i,1},\dots,X_{i, n_i})$ is flexibly modeled as $\tilde P_i = t(\tilde \mu_i)$, where $\tilde \mu_i$ is a random measure and $t$ is a suitable transformation that typically maps it to a space of random \emph{probability} measures. Notable examples, for which $\tilde \mu_i$ is a completely random measure, include normalization for random probability mass functions \citep{Regazzini2003}, kernel mixtures for densities \citep{Lo1984} and for hazards \citep{DykstraLaud1981, James2005}, exponential transformations for survival functions \citep{Doksum1974} and cumulative transformations for cumulative hazards \citep{Hjort1990}. When a priori the distribution for each group is assessed to be similar, it is often convenient to borrow information across different groups. One can allow for different levels of borrowing through the dependence structure of the random measures $\crv = (\tilde \mu_1, \dots, \tilde \mu_d)$, which regulates the interaction among $d \ge 2$ groups of observations;
see, e.g., \citet{Nguyen2016,Camerlenghi2019}. This class of models is summarized as $\bm{X}_i | \crv \sim t(\tilde \mu_i)$ independently for $i=1,\dots,d$, where $\crv$ has all equal marginal distributions. 
One may distinguish two extreme situations: (a) when the random measures are completely dependent, that is, $\tilde \mu_1= \dots= \tilde \mu_d$ almost surely,  there is no distinction between the different groups. In such case the observations are exchangeable, in the sense that their law is invariant with respect to permutations not only within the same group but also across different groups; (b) when the random measures are independent, the groups do not interact and consist of $d$ independent groups of exchangeable observations. When performing Bayesian inference on this class of models, different levels of prior interaction between groups entail smaller or greater borrowing of information in the posterior update, with a crucial impact on the estimates for the distribution of each group. One should thus enable the practitioner to choose the hyperparameters of the prior for $\crv$ so to include the desired level of interaction between groups. This leads to the need for a precise measure of dependence between $d$ random measures that simultaneously quantifies the discrepancy from both extreme situations of exchangeability and independence.\\
Our proposal is rooted in the theory of optimal transport and Wasserstein distances, which are flourishing in many scientific fields as a means to compare and connect different random structures \citep{Santambrogio2015, Villani2003, PanaretosZemel2018}. We take three conceptual steps: $1)$ For most completely random measures the density of $\crm_i(A)$ is intractable and the distribution of $\crv$ is specified in an indirect way in terms of a multivariate L\'evy measure, which characterizes its distribution. In order to have closed form expressions for a measure of dependence the first key idea is thus to define it at the level of the L\'evy measures. $2)$ We measure the dependence as distance from exchangeability, which corresponds to maximal dependence, by resorting to an extended Wasserstein distance between L\'evy measures. This geometric distance was introduced by \citet{FigalliGigli2010} in a different context and it remarkably allows for the comparison between measures with different and possibly infinite mass. We unravel key properties of this distance that allow one to find the optimal extended coupling for the distance from exchangeability, reducing its evaluation to a one-dimensional integral that can be computed numerically. $3)$ We use the distance to quantify the discrepancy with respect to the other extreme, independence, by proving that it achieves the maximum distance from exchangeability. This pivotal result requires to solve an intruiguing max-min problem whose solution leverages the dual formulation of the extended Wasserstein distance. There are two crucial consequences: first, once we find the maximum of the distance from exchangeability, we are able to renormalize the distance and obtain an index between $0$ and $1$; second, since the index is equal to $0$ if and only if the random measures are independent and equal to $1$ if and only if they are completely dependent, the Wasserstein index of dependence provides an overall measure of discrepancy from both extremes. 

The Wasserstein distance has been used to measure the dependence on Euclidean or Polish spaces in several interesting settings; see \cite{Nies2021} for an up-to-date account. Specifically, both \cite{Nies2021} and \cite{Mordant2022} propose to define an index of dependence by finding the supremum of the Wasserstein distance from an extremal dependence structure, which in their works  (as in many others) is independence. 
Measuring the distance from independence allows to be more flexible in the definition of maximal dependence for a random vector $(X,Y)$, going beyond almost sure equality ($Y=X$ a.s.). This is especially useful when the marginal distributions of $X$ and $Y$ differ: in such case the definition of maximal dependence usually boils down to $Y = f(X)$ a.s., for $f$ in some class of functions. However, there is no universal consensus on which class of functions to use. As effectively underlined in \cite{Nies2021}, such choice is context-specific and should rather be made on a case-by-case basis. Arguably, the most common classes found in the literature are: the whole set of measurable functions, the set of monotonic functions, and the set of linear functions (interestingly, \cite{Nies2021} define an index that is maximized on the set of $\alpha$-Lipschitz functions). While this level of flexibility may be valuable in many settings, in our context not only it is not necessary, since we are considering equal marginal distributions, but it could also be harmful in future extensions to the case with unequal marginals. Indeed, our notion of complete dependence for random measures is rooted in the full homogeneity of the underlying groups of observations (exchangeability), which requires the random measures to be almost surely equal.

A similar idea to the one of the present contribution can be found in \citet{Catalano2021}, where dependence is measured in terms of distance from exchangeability at the level of two random measures. This precludes exact calculations (only upper bounds are available) and ultimately does not provide any notion of discrepancy from independence. The distance from exchangeability alone can still be useful for relative comparisons between dependence structures (``$\crv^1$ is more dependent than $\crv^2$") but prevents absolute quantifications of dependence (``$\crv$ has an {\em intermediate} dependence structure") and the assessment of closeness to independence. Summing up, our Wasserstein index of dependence $I_\W$ crucially overcomes this limitation and has the following properties: (i) it is equal to $0$ if and only if the random measures are independent; (ii) it is equal to $1$ if and only if the random measures are completely dependent; (iii) it simultaneously quantifies the dependence of $d \ge 2$ random measures, avoiding the need for pairwise comparisons; (iv) since it is defined at the level of the L\'evy measures, it is possible to evaluate it numerically.
An important additional merit of the proposed index is that it allows for a principled comparison of the inferential performance of different models: by tuning their prior parameters to achieve the same value of the index of dependence, one can make a fair assessment of their posterior performance under different scenarios. 

The paper is structured as follows. In Section~\ref{sec:main} we define the Wasserstein index of dependence and state our main result (Theorem~\ref{th:main}), together with some intuition on both the statistical and the mathematical problems we address in this work. In Section~\ref{sec:wass} we define the extended Wasserstein distance between L\'evy measures and highlight some important novel properties that provide further insights on this distance and that are needed to prove the results in Section~\ref{sec:eval}. Here, we focus on the theoretical findings behind the evaluation of the index, which remarkably recover an explicit expression for the optimal transport coupling in this multivariate setting (Theorem~\ref{th:wass_diagonal}). In Section~\ref{sec:examples} we evaluate the Wasserstein index of dependence in notable models in the literature, namely additive random measures \citep{Mueller2004, Lijoi2014, LijoiNipoti2014} and compound random measures \citep{GriffinLeisen2017,GriffinLeisen2018, RivaPalacio2021}. We also investigate its behavior in the generic setup of multiple comonotone replicates of independent components. Finally in Section~\ref{sec:model_comparison} we perform a simulation study to showcase the relevance of the index to conduct principled and fair model comparisons. In the Supplementary Material we describe our proof techniques and the underlying optimal transport problem, which we believe are of interest beyond the present setup with natural applications to the theory of partial differential equations and of L\'evy processes.

\section{Main result}
\label{sec:main}

Most nonparametric models are built on random structures taking values on spaces of measures. Among this large class, completely random measures \citep{Kingman1967} stand out for their ability of combining analytical tractability with a large support. We recall that a random measure $\tilde \mu$ is {\em completely random} whenever its evaluations $\{\tilde \mu(A_1),\dots, \tilde \mu(A_n)\}$ on pairwise disjoint sets are mutually independent random variables on $[0,+\infty)$. Here and after, we use the term \emph{set} to indicate a Borel set on a generic Polish space, and the 
symbol $\sim$ to underline the randomness of the measure. Following \citet{Catalano2021}, we refer to a {\em completely random vector} as its multivariate extension.
\begin{definition}
A vector of random measures $\crv = (\tilde \mu_1, \dots, \tilde \mu_d)$ is a completely random vector (CRV) if for any $n \ge 2$, and for any family of pairwise disjoint sets $\{A_1,\dots, A_n \}$, the set-wise evaluations $\{\crv (A_1),\dots, \crv(A_n)\}$ are mutually independent random vectors on $[0,+\infty)^d$.
\end{definition}
The definition remarkably entails that $\crv = \sum_{i=1}^{\infty} (J_i^1,\dots, J_i^d) \delta_{Y_i}$ are almost surely discrete measures with jumps $\{(J_i^1,\dots, J_i^d)\}$ and common atoms $\{Y_i\}$ (up to a potential deterministic drift). CRVs often arise in Bayesian nonparametrics to model the interaction across distinct groups of observations. Prior specifications are typically based on CRVs without fixed atoms and with equal marginals, where the jumps and the atoms are independent ({\em homogeneity}) and every random measure has an infinite number of jumps on bounded sets ({\em infinite activity}). We focus on this class and restrict our attention to CRVs with finite second moments, that is $\mathbb{E}(\|\crv(A)\|^2) < +\infty$ for every set $A$, where we have used the compact notation $\crv(A)= \left(\tilde{\mu}_1(A), \ldots, \tilde{\mu}_d(A)\right)$.

The goal of our work is to provide a tractable index of dependence for CRVs. Since these multivariate random quantities live in a non-Euclidean space, a natural way to define the index is by introducing a distance $\mathcal{D}$ between the laws of CRVs. First, we highlight two extreme dependence structures: (a) complete dependence (or comonotonicity) $\crv\co$, where $\tilde \mu_1 = \dots = \tilde \mu_d$ almost surely, (b) independence $\crv^\perp$, where $\{\tilde \mu_1, \dots , \tilde \mu_d\}$ are independent random measures. Then, we define an index $I_\mathcal{D}$ between $0$ and $1$ in terms of distance from complete dependence:
\begin{equation}
\label{eq:index}
I_\mathcal{D}(\crv) = 1 - \frac{\mathcal{D}(\crv, \crv\co)^2}{\sup_{\crv'}\mathcal{D}(\crv', \crv\co)^2},
\end{equation}
where the supremum is taken over all CRVs $\crv'$ with the same marginal distributions as $\crv$, that is, $\tilde \mu'_i = \tilde \mu_i$ in distribution for $i=1,\dots,d$. Here, and in the sequel, we use the notation $\mathcal{D}(\crv, \crv\co) $ to indicate the distance between the laws of $\crv$ and $\crv\co$. We observe that the non-degeneracy of distances entails that $I_\mathcal{D}(\crv) =1$ if and only if $\crv = \crv\co$ in distribution. In order to evaluate this index in practice, we need to find a tractable distance $\mathcal{D}$ and to use its geometry to find the supremum of the distance from complete dependence. The underlying intuition is that the supremum should be achieved under independence and that the distance from any other dependence structure should be strictly smaller. We are able to make this intuition rigorous by building $\mathcal{D}$ on the Wasserstein distance, as defined below in \eqref{eq:distance}. This leads to our main result which we now state.

\begin{theorem}
\label{th:main}
Let $\crv$ be a homogeneous infinitely active CRV without fixed atoms, with equal marginals and finite second moments. The Wasserstein index of dependence $I_\W$ defined in \eqref{eq:index} with $\mathcal{D}$ equal to \eqref{eq:distance} satisfies the following properties:  
\begin{enumerate}[(i)]
\item $I_\W(\crv) \in [0,1]$;
\item $I_\W(\crv) = 1$ if and only if $\crv = \crv \co$ in distribution;
\item $I_\W(\crv) = 0$ if and only if $\crv=\crv^\perp$ in distribution. 
\end{enumerate}
\end{theorem}

\begin{remark}
By dropping the infinite activity assumption, one can prove that  (i) and (ii) continue to hold, whereas (iii) is replaced by (iii') $I_\W(\crv) = 0$ if $\crv=\crv^\perp$ in distribution. Details are provided in the proof of Theorem~\ref{th:main}.
\end{remark}

\begin{remark}
When defining the index \eqref{eq:index}, $\mathcal{D}(\crv, \crv\co)^2$ can be replaced by $\mathcal{D}(\crv, \crv\co)^p$ for any $p>0$, without compromising any of our main findings. We fix $p=2$ because of an intuitive linearity property on the space of measures highlighted in Remark~\ref{rem:linearity} and due to the use of the extended Wasserstein distance of order 2, as will be clear in Section~\ref{sec:wass}. This allows one to draw a parallel with linear correlation in Section~\ref{sec:additive}.
\end{remark}

Before considering technical aspects, let us first provide some intuition on the definition of a tractable distance $\mathcal{D}$ on the laws of CRVs. We observe that thanks to the independence on disjoint sets, the distribution of a CRV $\crv$ is characterized by the set-wise evaluations $\{\crv(A)\}$, where $A$ spans over all sets. The definition of $\mathcal{D}$ is then achieved through two conceptual steps. Since $\crv(A)$ takes values in $\R^d$, the first step consists in reducing the dimensionality of the problem by expressing the distance as a supremum over distances between finite-dimensional random objects, $\mathcal{D}(\crv^1, \crv^2) = \sup_{A} \mathcal{D}_d(\crv^1(A), \crv^2(A))$, where $\mathcal{D}_d$ indicates a distance between the laws of set-wise evaluations. The second step consists in choosing a distance $\mathcal{D}_d$ that allows for numerical evaluations. To this end, it is worth underlining that the density and the cumulative distribution function of $\crv(A)$ are usually intractable and its (multivariate infinitely divisible) distribution is specified through a L\'evy intensity $\nu_A(\cdot) = \alpha(A) \nu(\cdot)$ for some base measure $\alpha$ with finite mass and some L\'evy measure $\nu$, which characterizes the distribution of $\crv(A)$ through its Laplace transform. More specifically, let $\Omega_d = [0, + \infty)^d \setminus \{ \bf{0} \}$. Then $\nu_A$ is the only measure on $\Omega_d$ that satisfies
\begin{equation}
\label{def:levy}
- \log \big( \mathbb{E}\big(e^{-\bm \lambda \crv(A)} \big) \big) = \int_{\Omega_d} (1-e^{-\bm \lambda \bm s}) \nu_A(\ddr {\bm s}), 
\end{equation}
for every ${\bm \lambda} \in [0, + \infty)^d$ and for every set $A$. For this reason, the most natural choice is to define the distance directly on the L\'evy intensities. \\
When restricting to L\'evy intensities, the distance should allow for informative comparisons between measures with (i) unbounded mass, which is always the case under infinite activity, (ii) different support, which is crucial in our context since the L\'evy intensity under complete dependence has a degenerate support on the bisecting line, see Figure~\ref{fig:support}. We show that the extended Wassertstein distance $\mathcal{W}_*$ (Definition~\ref{def:extended_wass} below), introduced by \citet{FigalliGigli2010} and specialized to L\'evy measures in \citet{Guillen2019}, remarkably satisfies both these properties. This leads to the study of the following distance between the laws of CRVs:
\begin{equation}
\label{eq:distance}
\mathcal{D}(\crv^1, \crv^2) = \sup_{A} \mathcal{W}_*(\nu_A^1,\nu_A^2),
\end{equation}
where $\nu_A^1$ and $\nu_A^2$ are the L\'evy intensities of the corresponding CRVs, uniquely defined by \eqref{def:levy}, and $A$ spans over all sets.

\section{Wasserstein distance between L\'evy measures}
\label{sec:wass}

In this section we introduce the extended Wasserstein distance between L\'evy measures, highlight its relation to the {\em classical} Wasserstein distance between probability measures and state some key properties underlying the Wasserstein index of dependence. We refer to the supplement for additional results on this optimal transport problem and in particular for the dual formulation, which is pivotal in the proof of Theorem~\ref{th:main}. We first introduce the classical framework and refer to \citet{Santambrogio2015, Villani2003, PanaretosZemel2018} for exhaustive accounts. 

The definition of Wasserstein distance starts with the notion of coupling. To this end, for a point $(\bm{s},\bm{s}') \in \R^{2d}$, we denote by $\pi_1(\bm{s},\bm{s}') = \bm{s} \in \R^d$ and $\pi_2(\bm{s},\bm{s}') = \bm{s}' \in \R^d$ its projections. Moreover, if $\mu$ is a measure on $\mathbb{X}$ and $f : \mathbb{X} \to \mathbb{Y}$,  $f_\# \mu$ stands for the pushforward of $\mu$ by $f$, that is, the measure on $\mathbb{Y}$ defined by $(f_\# \mu)(A) = \mu(f^{-1}(A))$. If $\nu^1, \nu^2$ are two probability measures on $\R^d$, a coupling $\gamma$ is a probability measure on $\R^{2d}$ such that ${\pi_i}_\# \gamma = \nu^i$ for $i=1,2$. Equivalently, it can be seen as a law of a random vector $(X,Y)$ such that $X \sim \nu^1$ and $Y \sim \nu^2$. Let $\Gamma(\nu^1,\nu^2)$ be the set of couplings. If $\nu^1,\nu^2$ are probability measures on $\R^d$ with finite second moments, the classical Wasserstein distance is defined as
\begin{equation}
\label{eq:classical_wass}
\W(\nu^1,\nu^2)^2 = \inf_{\gamma \in \Gamma(\nu^1,\nu^2)} \iint_{\R^{2d}} \| \bm{s} - \bm{s}' \|^2 \, \ddr \gamma(\bm{s},\bm{s}') = \inf_{X \sim \nu^1, \, Y \sim \nu^2} \mathbb{E} \left[ \| X - Y \|^2 \right].   
\end{equation}
There always exists a coupling $\gamma^*$ that realizes the infimum in \eqref{eq:classical_wass} and it is termed an optimal transport coupling. If there exists $T:\mathbb{R}^d \rightarrow \mathbb{R}^d$ such that $\gamma^* = (\textup{id},T)_{\#}\nu^1$, $T$ is termed optimal transport map and the Wasserstein distance can be conveniently expressed in the form $\W(\nu^1,\nu^2)^2 = \int \|\bm{s} - T(\bm{s})\|^2 \ddr \nu^1(\bm{s})$. 

\emph{A priori} this definition requires $\nu^1$ and $\nu^2$ to be probability measures, or at least to have finite and equal mass. Following \citet{FigalliGigli2010} and \citet{Guillen2019}, we now extend it to measures with different or infinite mass. Let $\Omega_d = [0, + \infty)^d \setminus \{ \bf{0} \}$ and let $\M_2(\Omega_d)$ denote the set of positive Borel measures $\nu$ on $\Omega_d$ with finite second moments, that is 
\[
\M_2(\Omega_d) = \bigg\{ \nu \text{ positive Borel measure on $\Omega_d$ s.t. } M_2(\nu) = \int_{\Omega_d} \|\bm{s}\|^2 \ddr \nu(\bm{s}) <+\infty \bigg\}.
\]
For a measure $\gamma \in \M_2(\Omega_{2d})$ the projections  ${\pi_i}_\# \gamma$ are measures on $[0,+\infty)^d$. In the following definition we need to consider their restrictions to $\Omega_d$, which we denote as $\left. {\pi_i}_\# \gamma \right|_{\Omega_d}$.

\begin{definition}[Extended Wasserstein distance] 
\label{def:extended_wass}
Let $\nu^1, \nu^2 \in \M_2(\Omega_d)$ and let $\overline{\Gamma}(\nu^1,\nu^2)$ be the set of $\gamma \in \M_2(\Omega_{2d})$ such that $\left. {\pi_1}_\# \gamma \right|_{\Omega_d} = \nu^1$ and $\left. {\pi_2}_\# \gamma \right|_{\Omega_d} = \nu^2$. We define
\begin{equation}
\label{eq:def_Wasserstein_Levy}
\W_*(\nu^1,\nu^2)^2 = \inf_{\gamma \in \overline{\Gamma}(\nu^1,\nu^2)} \iint_{\Omega_{2d}} \| \bm{s} - \bm{s}' \|^2 \, \ddr \gamma(\bm{s},\bm{s}').
\end{equation}
\end{definition}
``Extended'' couplings $\overline{\Gamma}(\nu^1,\nu^2)$ are needed to prove the existence of an optimal coupling, that is, to prove that the infimum is attained in \eqref{eq:def_Wasserstein_Levy}. To give an intuition to the reader, couplings $\gamma \in \overline{\Gamma}(\nu^1,\nu^2)$ are defined on $\Omega_{2d}$, which is strictly larger than $\Omega_d \times \Omega_d$ as it includes $\{ \bm{0} \} \times \Omega_d$ and $\Omega_d \times \{ \bm{0} \}$. Moreover, the mass that $\gamma$ puts on $\{ \bm{0} \} \times \Omega_d$ only contributes to $\left. {\pi_2}_\# \gamma \right|_{\Omega_d}$ and not to $\left. {\pi_1 }_\# \gamma \right|_{\Omega_d}$ because we look at the marginal ${\pi_1}_\# \gamma$, a priori defined on $[0,+\infty)^d$ but we restrict it to $\Omega_d$. Intuitively, the point $\{\bm{0}\}$ behaves like an infinite reservoir and sink of mass: $\gamma(\Omega_d \times \{ \bm{0} \})$ (resp. $\gamma(\{ \bm{0} \} \times \Omega_d)$) correspond to the mass exchanged by $\nu^1$ (resp. $\nu^2$) with this reservoir. We define optimal transport couplings in $\overline{\Gamma}(\nu^1,\nu^2)$ and optimal transport maps $T:\Omega_d \rightarrow \Omega_d$ as for the classical Wasserstein distance. In the Supplementary Material we provide a characterization of extended optimal couplings in terms of $c$-cyclically monotone support, which establishes an interesting link with the couplings studied in \cite{deValk2019} in the context of tail limits of regularly varying probability measures.

Let us further explain the link between this distance and the work of \citet{Catalano2021}. Indeed, in the latter the authors introduce 
a distance based only on the Wasserstein distance between setwise evaluations of the CRV, but also provide  an upper bound in terms of a quantity depending on L\'evy measures (see Theorem 5 therein). We recover this result by a simpler proof in~\eqref{eq:theorem_catalano_et_al} below, which brings to a better understanding of our distance. We start from the following alternative expression of our extended Wasserstein distance. 

\begin{proposition}
\label{prop:new_proof_upper_bound}
Let $\crv^i$ satisfy the assumptions of Theorem~\ref{th:main} and let $\nu_A^i$ indicate its L\'evy intensities as in \eqref{def:levy}, for $i=1,2$. Then for any set $A$,
\begin{equation*}
\W_*(\nu^1_A,\nu^2_A)^2 = \min_{(\hat{{\bm \mu}}^1, \hat{{\bm \mu}}^2)} \;  \mathbb{E}(\| \hat{{\bm \mu}}^1(A) - \hat{{\bm \mu}}^2(A) \|^2),
\end{equation*}
where the minimum is taken over all homogeneous CRV $(\hat{{\bm \mu}}^1, \hat{{\bm \mu}}^2)$ such that $\hat{{\bm \mu}}^i = \crv^i$ in distribution for $i=1,2$. 
\end{proposition}

This proposition 
immediately yields an upper bound on the Wasserstein distance between the laws of the setwise evaluations $\crv^1(A), \crv^2(A)$:
\begin{equation}
\label{eq:new_upper_bound}
\W(\crv^1(A), \crv^2(A))^2 \leq \W_*(\nu^1_A,\nu^2_A)^2,
\end{equation}
as the quantity in the left hand side corresponds to a minimum taken among all couplings between the random vectors $\crv^1(A), \crv^2(A)$, while the right hand side restricts to couplings that derive from the law of a joint CRV.  

\begin{remark}
The inequality in~\eqref{eq:new_upper_bound} is generically strict, clearly implying that the distance in \citet{Catalano2021} and our proposal are different. Indeed, for a homogeneous CRV $(\hat{{\bm \mu}}^1, \hat{{\bm \mu}}^2)$ and a set $A$, the coupling  $(\hat{{\bm \mu}}^1(A) ,  \hat{{\bm \mu}}^2(A))$ is not deterministic (that is, $\hat{{\bm \mu}}^2(A)$ is not a deterministic function of $\hat{{\bm \mu}}^1(A)$). Thus by Brenier's theorem \citep[Theorem 2.12]{Villani2003} it cannot be the optimal coupling between the law of $\crv^1(A), \crv^2(A)$.   
\end{remark}

The next step to recover the result of~\citet{Catalano2021} is a rewriting of the extended Wasserstein distance as a limit of classical Wasserstein distances, at least under the assumption of infinite mass.

\begin{proposition}
\label{th:wass_old}
Let $\nu^1, \nu^2 \in \M_2(\Omega_d)$ be L\'evy measures with finite second moments such that $\nu^1(\Omega_d) = \nu^2(\Omega_d) = + \infty$. For each $r > 0$, assume that $\nu^1_r, \nu^2_r$ are two measures with finite mass $r$ such that for each set $B$, $\nu^i_r(B) \rightarrow \nu^i(B)$ increasingly as $r \to + \infty$. Then
\begin{equation*}
 \W_*(\nu^1,\nu^2) = \lim_{r \to + \infty} \sqrt{r} \, \W \left( \frac{\nu^1_r}{r}, \frac{\nu^2_r}{r} \right).   
\end{equation*}
\end{proposition}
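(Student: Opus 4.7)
I would split the proof into two steps: an algebraic reduction
\[
\sqrt{r}\,\W(\nu^1_r/r,\nu^2_r/r)=\W_*(\nu^1_r,\nu^2_r),
\]
valid for every fixed $r>0$, and the continuity-type limit $\W_*(\nu^1_r,\nu^2_r)\to\W_*(\nu^1,\nu^2)$ as $r\to+\infty$, obtained via weak$^*$ compactness of the optimal couplings.

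For the reduction, the inequality $\W_*(\nu^1_r,\nu^2_r)\leq \sqrt{r}\,\W(\nu^1_r/r,\nu^2_r/r)$ is immediate since classical couplings of $\nu^1_r,\nu^2_r$ are admissible for the extended problem. For the converse, decompose any $\gamma\in\overline{\Gamma}(\nu^1_r,\nu^2_r)$ as $\gamma=\alpha+\beta+\delta$ with $\alpha$ supported on $\Omega_d\times\Omega_d$, $\beta$ on $\{\bm 0\}\times\Omega_d$ and $\delta$ on $\Omega_d\times\{\bm 0\}$. The equal mass of the marginals on $\Omega_d$ forces the total masses of $\beta$ and $\delta$ to agree; calling this common value $m$, if $m>0$ I replace $\beta+\delta$ by the product coupling $\tau=\frac{1}{m}\,{\pi_1}_\#\delta\otimes{\pi_2}_\#\beta$. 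The key observation is that $\bm s\cdot\bm s'\geq 0$ in the positive orthant, so expanding the squared distance gives $\iint\|\bm s-\bm s'\|^2\,\ddr\tau\leq\int\|\bm s\|^2\,\ddr\,{\pi_1}_\#\delta+\int\|\bm s'\|^2\,\ddr\,{\pi_2}_\#\beta$, which is precisely the cost of $\beta+\delta$ inside $\gamma$. Thus $\alpha+\tau$ is a classical coupling of $\nu^1_r,\nu^2_r$ of no greater cost than $\gamma$.

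For the limit, pick for each $r$ an optimal $\gamma_r\in\overline{\Gamma}(\nu^1_r,\nu^2_r)$ and note the uniform bound $\iint(\|\bm s\|^2+\|\bm s'\|^2)\,\ddr\gamma_r\leq\int\|\bm s\|^2\,\ddr\nu^1+\int\|\bm s'\|^2\,\ddr\nu^2<\infty$ (since $\nu^i_r\leq\nu^i$). This gives local weak$^*$ compactness on $\Omega_{2d}$, so a subsequence converges weakly$^*$ to some $\gamma\in\M_2(\Omega_{2d})$. Proposition~\ref{th:cm} and Lemma~\ref{th:cm_stable} transfer $c$-cyclical monotonicity of $\textup{supp}(\gamma_r)\cup\{\bm 0\}$ to $\textup{supp}(\gamma)\cup\{\bm 0\}$. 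To identify the marginals of $\gamma$, I test against $f\in C_c(\Omega_d)$ by approximating $f\circ\pi_1$ with $f(\bm s)h_n(\bm s')$, where $h_n\in C_c(\overline{\Omega_d})$, $h_n\uparrow 1$ and $h_n=1$ on $\{\|\bm s'\|\leq n\}$; the uniform tail bound $\gamma_r(\|\bm s'\|>n)\leq n^{-2}\int\|\bm s'\|^2\,\ddr\nu^2$ lets me interchange the limits in $n$ and $r$. Combined with $\nu^i_r\uparrow\nu^i$, this yields $({\pi_i}_\#\gamma)|_{\Omega_d}=\nu^i$, so $\gamma\in\overline{\Gamma}(\nu^1,\nu^2)$. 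Proposition~\ref{th:cm} then gives the optimality of $\gamma$, i.e.\ $\iint\|\bm s-\bm s'\|^2\,\ddr\gamma=\W_*(\nu^1,\nu^2)^2$.

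To promote lower-semicontinuity to a genuine limit of the squared costs, expand $\|\bm s-\bm s'\|^2=\|\bm s\|^2-2\bm s\cdot\bm s'+\|\bm s'\|^2$. The diagonal parts reduce to $\int\|\bm s\|^2\,\ddr\nu^1_r$ and $\int\|\bm s'\|^2\,\ddr\nu^2_r$, which converge to their $\nu^i$-counterparts by monotone convergence; for the cross term, $\iint\bm s\cdot\bm s'\,\ddr\gamma_r\to\iint\bm s\cdot\bm s'\,\ddr\gamma$ follows from weak$^*$ convergence together with the uniform integrability of $\bm s\cdot\bm s'\leq(\|\bm s\|^2+\|\bm s'\|^2)/2$ against $\{\gamma_r\}$, itself a consequence of the uniform second-moment bound. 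Hence $\W_*(\nu^1_r,\nu^2_r)^2\to\W_*(\nu^1,\nu^2)^2$, and together with the reduction this proves the proposition. The main obstacle is the careful identification of the marginals of the weak$^*$ limit: the test functions $f\circ\pi_1$ are not compactly supported on $\Omega_{2d}$ and, without additional control, mass could concentrate near the origin as $r\to+\infty$; the uniform second-moment estimate supplies exactly the tightness needed to rule both issues out.
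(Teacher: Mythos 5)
Your proof is correct in its essentials, but it takes a genuinely different route from the paper's and contains one small imprecision worth flagging.

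The paper works directly with the classical optimal couplings $\gamma_n \in \Gamma(\nu^1_n, \nu^2_n)$, extracts a weak* limit $\gamma^*$, shows the marginals match, invokes Lemma~\ref{th:cm_stable} to conclude $\textup{supp}(\gamma^*)$ is $c$-cyclically monotone, and then \emph{separately proves} $\bm 0 \in \textup{supp}(\gamma^*)$ using the hypothesis $\nu^i(\Omega_d) = +\infty$: if $\bm 0 \notin \textup{supp}(\gamma^*)$ the total mass of $\gamma^*$ would be finite, a contradiction. Only after that does Proposition~\ref{th:cm} apply, since that proposition requires $c$-cyclical monotonicity of $\textup{supp}(\gamma^*) \cup \{\bm 0\}$, not just of $\textup{supp}(\gamma^*)$. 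Your proposal replaces this by the assertion that ``Proposition~\ref{th:cm} and Lemma~\ref{th:cm_stable} transfer $c$-cyclical monotonicity of $\textup{supp}(\gamma_r)\cup\{\bm 0\}$ to $\textup{supp}(\gamma)\cup\{\bm 0\}$.'' That is not quite what Lemma~\ref{th:cm_stable} says: it transfers $\textup{supp}(\gamma_r)$ to $\textup{supp}(\gamma)$, with no provision for the adjoined point $\bm 0$ (which lies outside $\Omega_{2d}$ and is invisible to weak* convergence on $\Omega_{2d}$). Your route can be repaired --- either by applying the Hausdorff-limit argument of Lemma~\ref{th:cm_stable} to the measures $\gamma_r + \delta_{\bm 0}$ on $[0,+\infty)^{2d}$, or, more in the spirit of the paper, by reproducing the argument that the infinite-mass assumption forces $\bm 0 \in \textup{supp}(\gamma)$. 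As written you never invoke $\nu^i(\Omega_d) = +\infty$, which is a signal that something has been elided; this hypothesis is precisely what closes the gap.

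What you do differently, and correctly, is worth noting. Your Step~1 --- the identity $\sqrt{r}\,\W(\nu^1_r/r,\nu^2_r/r) = \W_*(\nu^1_r,\nu^2_r)$ for every fixed finite-mass pair, proved by replacing the boundary mass $\beta + \delta$ of an extended coupling with the product coupling $\tau = m^{-1}\,{\pi_1}_\#\delta \otimes {\pi_2}_\#\beta$ and exploiting $\bm s \cdot \bm s' \ge 0$ in the positive orthant --- is a clean, self-contained observation that the paper never states; the paper instead just uses $\gamma_n$ as a classical optimal coupling and never needs to identify $\W$ with $\W_*$ at the level of the truncated measures. Your Step~1 also has the pleasant byproduct that $\textup{supp}(\gamma_r)\cup\{\bm 0\}$ is $c$-cyclically monotone (whereas the paper only ever uses that $\textup{supp}(\gamma_r)$ is). For the convergence of costs, your ``expand the square'' device $\|\bm s-\bm s'\|^2 = \|\bm s\|^2 - 2\bm s\cdot\bm s' + \|\bm s'\|^2$ cleanly reduces the diagonal terms to second moments of the marginals (handled by monotone convergence) and isolates the cross term; the paper instead cuts off with the annular compacts $A_m = [-m,m]^{2d}\setminus(-1/m,1/m)^{2d}$. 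Both approaches need essentially the same uniform tightness near $\bm 0$ and at infinity, supplied by the uniform second-moment bound, so this is a presentational rather than substantive difference. Finally, both your proof and the paper's establish convergence along a subsequence only; it would be worth stating explicitly that the full limit follows because every subsequence admits a further subsequence converging to $\W_*(\nu^1,\nu^2)^2$.
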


\begin{remark} 
\label{rem:linearity}
The factor $\sqrt{r}$ comes from the $1/2$-homogeneity of the Wasserstein distance with respect to the mass. Similarly, one can easily see that $\W_*(a \nu^1, a \nu^2)^2 = a \W_*(\nu^1, \nu^2)^2$ for any $a > 0$. This has an important consequence in our context because of the homogeneity of CRVs. Indeed, the L\'evy intensities satisfy $\nu_A(\cdot) = \alpha(A) \nu(\cdot)$ for some base measure $\alpha$ with total mass $\bar \alpha$ and some L\'evy measure $\nu$. Thus $\W_*(\nu^1_A, \nu^2_A)^2 = \alpha(A) \W_*(\nu^1, \nu^2)^2$ and as an immediate, yet remarkable consequence its supremum is achieved on the total space. This implies that
\[
\mathcal{D}(\crv^1, \crv^2)^2 = \bar \alpha \W_*(\nu^1, \nu^2)^2
\]
and the base measure $\alpha$ only acts as a scaling factor through its total mass $\bar \alpha$. In particular, when normalizing the distance to obtain the index of dependence, the scaling factor cancels out (see \eqref{eq:index2} below) and the base measure does not impact the value of the index. This corresponds to a desirable intuitive property, since all the dependence is introduced at the level of the jumps, whose joint distribution does not depend on $\alpha$.
\end{remark}

Putting together bound~\eqref{eq:new_upper_bound} and Proposition~\ref{th:wass_old}, as well as the scaling of Remark~\ref{rem:linearity}, we obtain that for every set $A$,
\begin{equation}
\label{eq:theorem_catalano_et_al}
\W(\crv^1(A), \crv^2(A)) \leq \sqrt{\alpha(A)}  \lim_{r \to + \infty} \frac{1}{\sqrt{r}} \, \W \left( \frac{\nu^1_r}{r}, \frac{\nu^2_r}{r} \right),
\end{equation}
which is nothing else than Theorem 5 in \citet{Catalano2021}, but now the right hand side has a much neater interpretation.

The last key result that we state is that, similarly to the classical case, the optimal transport coupling for measures lying on the one-dimensional axis is the unique non-decreasing one. Moreover for atomless measures with infinite mass, as in our context, we also find the expression of the optimal transport map. Recall that $\Omega_1 = (0,+\infty)$. For a measure $\nu \in \M_2(\Omega_1)$ with finite second moment we define its tail integral $U_\nu : x \in \Omega_1 \mapsto \nu((x,+\infty))$ and its generalized inverse $U_{\nu}^{-1} : t \mapsto \inf \{ x \ge 0 \ : \ U_\nu(x) \le t \}$. If $U_\nu$ is injective, it coincides with the usual inverse. 
Moreover, we denote by $\textup{Leb}(\Omega_1)$ the Lebesgue measure on $\Omega_1$.

\begin{proposition} 
\label{th:wass_1d}
Let $\nu^1,\nu^2 \in \M_2(\Omega_1)$ be L\'evy measures with finite second moment and let $\gamma$ be the restriction of  $(U_{\nu^1}^{-1},U_{\nu^2}^{-1})_{\#} \textup{Leb}(\Omega_1)$ to $\Omega_2$. Then $\gamma \in \overline{\Gamma}(\nu^1,\nu^2)$ is the unique optimal transport coupling and
\[
\W_*(\nu^1, \nu^2)^2 = \int_{0}^{+\infty} (U_{\nu^1}^{-1}(s) - U_{\nu^2}^{-1}(s))^2 \, \ddr s.
\]
Moreover, if $\nu^1$ is atomless and $\nu^1(\Omega_1) \geq \nu^2(\Omega_1)$, $T(x) = U_{\nu^2}^{-1}(U_{\nu^1}(x))$ is an optimal transport map.
\end{proposition}

\section{Evaluation of the index}
\label{sec:eval}

In this section we provide some guidance on how to use the properties of the extended Wasserstein distance in Section~\ref{sec:wass} to evaluate the Wasserstein index of dependence $I_\mathcal{W}$. \\
First of all it is worth underlying that both extreme dependence structures of complete dependence ($\crv \co$) and independence ($\crv^\perp$), as defined in Section~\ref{sec:main}, are CRVs and therefore their law is characterized by L\'evy intensities $\nu_A \co = \alpha(A) \nu \co$ and $\nu_A^\perp = \alpha(A) \nu^\perp$, respectively, where $\alpha$ is a finite measure, while $\nu \co$ and $\nu^\perp$ are L\'evy measures. We recall that under complete dependence the L\'evy measure $\crv \co$ is concentrated on the bisecting line, while under independence the L\'evy measure $\nu^\perp$ is supported on the axes, that is
\[
\ddr \nu \co (\sbf) = \left( \prod_{i=2}^d \ddr \delta_{s_1}(s_i) \right) \, \ddr \overline{\nu} (s_1), \qquad \qquad \ddr \nu^\perp(\sbf) = \sum_{j=1}^d \left( \prod_{i \neq j} \ddr \delta_{0}(s_i) \, \ddr \overline{\nu}(s_j) \right),
\]
where $\delta$ is the Dirac measure and $\overline \nu$ is the L\'evy measure of the marginal completely random measures. We refer to Figure~\ref{fig:support} for intuition, and to \citet{ContTankov2004} and \citet{Catalano2021} for a proof in the context of multivariate L\'evy processes and of completely random vectors, respectively.

Let now $\crv$ be a homogeneous infinitely active CRV without fixed atoms, with equal marginals and finite second moments, whose dependence we wish to quantify. As recalled in \eqref{def:levy}, $\crv$ is uniquely characterized by the L\'evy intensities $\nu_A = \alpha(A) \nu $, where $\alpha$ is a finite measure and $\nu$ is a L\'evy measure. Starting from \eqref{eq:index}, the homogeneity property highlighted in Remark~\ref{rem:linearity} ensures that
\begin{equation}
\label{eq:index2}
I_\W (\crv) = 1- \frac{\W_*(\nu,\nu \co)^2}{\sup_{\nu'}\W_*(\nu',\nu \co)^2},
\end{equation}
where the supremum is taken over all L\'evy measures on $\Omega_d$ with the same marginals as $\nu$, that is, $\pi_{i\#} \nu = \pi_{i\#} \nu' = \bar \nu$ for $i=1,\dots,d$.
The evaluation of $I_\mathcal{W}$ requires two steps:  (i) to compute the numerator, we need
to find an optimal extended coupling between $\nu$ and $\nu \co$, so to have an integral expression for $\W_*(\nu,\nu \co)$; (ii) to compute the denominator, we have to find the supremum of $\W_*(\nu,\nu \co)$ over all possible dependence structures of $\nu$, which involves solving a highly non-trivial max-min problem. The solutions to these two points are strongly intertwined. The integral expression for $\W_*(\nu,\nu \co)$ is provided 

\begin{figure}
\begin{center}
\begin{tikzpicture}[scale = 0.48]

\draw [->, line width = 1pt] (0,0) -- (5,0) ;
\draw [->, line width = 1pt] (0,0) -- (0,5) ;
\draw (5,0) node[above]{$s_1$} ;
\draw (0,5) node[right]{$s_2$} ;

\draw [->, line width = 1pt] (7,0) -- (12,0) ;
\draw [->, line width = 1pt] (7,0) -- (7,5) ;
\draw (12,0) node[above]{$s_1$} ;
\draw (7,5) node[right]{$s_2$} ;

\draw [-, line width = 1.5pt, red] (0,0) -- (4.5,4.5) ;
\foreach \i in {0,1,...,10}{
\filldraw[red] ({0.45*\i},{0.45*\i}) circle (3pt);}

\draw [-, line width = 1.5pt, red, opacity = 0.5] (7,0) -- (11.5,0) ;
\foreach \i in {0,1,...,7}{
\filldraw[red] ({7+0.64*\i},0) circle (3pt);}

\draw [-, line width = 1.5pt, red, opacity = 0.5] (7,0) -- (7,4.5) ;
\foreach \i in {1,...,7}{
\filldraw[red] (7,{0.64*\i}) circle (3pt);}

\end{tikzpicture}
\end{center}
\caption{Support of $\nu \co$ and $\nu^\perp$ in $\Omega_2$, respectively.}
\label{fig:support}
\end{figure}

in Theorem~\ref{th:wass_diagonal} below and has  two benefits:  on the one hand it allows one to compute the numerator in explicit examples in the literature, and on the other it also provides the starting point to solve (ii). In the proof of Theorem~\ref{th:main} we show that the supremum is a maximum and it is achieved under independence, that is, $\W_*(\nu,\nu \co) \le \W_*(\nu^\perp,\nu \co)$ with equality if and only if $\nu = \nu^\perp$. To this end, we heavily rely on the dual formulation of $\W_*$, whose details are provided in the supplementary material. Finally, to compute the denominator, we have to evaluate the distance between complete dependence and independence, which is done by resorting again to Theorem~\ref{th:wass_diagonal}.

\begin{remark}
In principle, the L\'evy measure $\nu$ corresponding to a CRV $\crv$ is a positive Borel measure on $\Omega_d$ with finite second moments near the origin. Because of infinite activity, $\nu$ has infinite mass. Since $\crv$ has fixed and equal marginal distributions, $\nu$ has $d$ equal marginals, that is, ${\pi_i}_{\#}\nu = \overline{\nu}$ for $i=1,\dots,d$, where $\overline{\nu}$ is a 1-dimensional L\'evy measure on $\Omega_1$. Since we restrict to $\crv$ with finite second moments, $M_2(\overline{\nu}) = \int_{\Omega_1} s^2 \ddr \overline{\nu}(s)$ is finite and thus $\overline{\nu}$ belongs to $\M_2(\Omega_1)$. In particular, the L\'evy measure $\nu \in \M_2(\Omega_d)$ has finite second moments. 
\end{remark}

\begin{theorem}
\label{th:wass_diagonal}
Let $\nu \in \M_2(\Omega_d)$ be a L\'evy measure with finite second moments and equal marginals ${\pi_i}_{\#}\nu = \overline{\nu}$ for $i=1,\dots,d$. Denote by $\nu^+ = \Sigma_{\#} \nu \in \M_2(\Omega_1)$, where $\Sigma(\sbf) = \sum_{i=1}^d s_i$. Then 
\[
\W_*(\nu,\nu \co)^2 = 2d M_2(\overline{\nu}) - 2 \int_{0}^{+\infty} U_{\nu^+}^{-1}(s) \, U_{\overline{\nu}}^{-1}(s) \, \ddr s,
\]
where $M_2(\overline{\nu}) = \int_{\Omega_1} s^2 \ddr \overline{\nu}(s)$. Moreover, when $\nu = \nu^\perp$, 
\[
\W_*(\nu^\perp,\nu \co)^2 = 2d \big( M_2(\overline{\nu}) -  \int_{0}^{+\infty} s \, U_{\overline \nu}^{-1}( d \, U_{\overline \nu} (s) ) \, \ddr \overline \nu(s) \big).
\]
\end{theorem}

When $\nu^+$ is atomless, Theorem~\ref{th:wass_diagonal} amounts to showing that $\sbf \mapsto (T(\sum_{i=1}^d s_i),\dots, T(\sum_{i=1}^d s_i))$ is an optimal transport map between $\nu$ and $\nu \co$, with $T(s) = U_{\overline{\nu}}^{-1}( U_{\nu^+} (s) )$. Thanks to Proposition~\ref{th:wass_1d}, $T$ is the optimal transport map from $\nu^+$ to $\bar \nu$. Thus, the optimal way to transport the mass of $\nu$ to $\nu \co$ first sends each point $\sbf \mapsto (\sum_{i=1}^d s_i,\dots, \sum_{i=1}^d s_i)$, so to concentrate the mass onto the bisecting line. This is then optimally transported to $\nu \co$, reducing to an optimal transport problem on a one-dimensional subspace of $\Omega_d$ and thus crucially ending up with a tractable computation. In short, we use the geometry of the support of the measure $\nu \co$ to find the explicit expression of the optimal transport map, whereas in the general case one has to solve a nonlinear partial differential equation for which there is no explicit solution, see \citet[Chapter 4]{Villani2003}.

\section{Examples}
\label{sec:examples}

\subsection{Additive models}
\label{sec:additive}
Additive models first appeared in the Bayesian nonparametric literature to borrow information across distinct groups of observations \citep{Mueller2004, Lijoi2014, LijoiNipoti2014}. The dependence between random measures is introduced in a natural way  through a superposition of independent components. In this section we find the corresponding L\'evy measure and use it to evaluate the Wasserstein index of dependence in terms of the hyperparameter of the model. When restricting to 2-dimensional vectors, this brings to interesting links with linear correlation. 

Let $\tilde{\xi}_0,\tilde{\xi}_1,\dots, \tilde{\xi}_d$ be independent completely random measures whose L\'evy measures satisfy $\nu_0 = z \bar \nu$ and $\nu_i = (1-z) \bar \nu$, where $z \in [0,1]$ and $\bar \nu \in \M_2(\Omega_1)$ is a fixed L\'evy measure, for $i=1,\dots,d$. A CRV $\crv$ is said to be {\em additive} or GM-dependent if its marginals satisfy $\tilde \mu_i = \tilde{\xi}_i + \tilde{\xi}_0$ in distribution, for $i=1,\dots,d$. 
The parameter $z$ adjusts for dependence linearly with respect to the L\'evy measures, reaching complete dependence $\crv \co$ as $z \rightarrow 1$ and independence $\crv^\perp$ as $z \rightarrow 0$.

\begin{lemma}
\label{th:additive_levy}
The L\'evy measure of an additive CRV is $\nu = z \nu \co + (1-z)\nu^\perp$.
\end{lemma}

\noindent It follows that the L\'evy measure of an additive CRV has mass both on the bisecting line and on the axes,  differently weighted according to the parameter $z$. In particular, the marginals are not affected by $z$ since ${\pi_i}_{\#}\nu = z {\pi_i}_{\#}\nu \co + (1-z) {\pi_i}_{\#}\nu^\perp = \bar \nu$, for $i=1,\dots,d$. Let
\[
U_{\nu^+_z}(s) = d (1-z) U_{\overline{\nu}}(s) + z U_{\overline{\nu}}(sd^{-1}), \qquad \nu^+_z(s) = d (1-z) \overline{\nu}(s) + z d^{-1}   \overline{\nu}(s d^{-1}).
\]

\begin{proposition}
\label{th:additive}
Let $\crv$ be a $d$-dimensional additive CRV of parameter $z$ such that $M_2(\overline{\nu}) = \int_{\Omega_1} s^2 \ddr \overline{\nu}(s) < +\infty$. Then $I_\W(\crv) \ge z$ and
\[
I_\W(\crv) = 1 - \frac{dM_2(\overline{\nu})- \int_{0}^{+\infty} s \, U_{\overline{\nu}}^{-1} (U_{\nu^+_z}(s)) \nu^+_z(s) \,  \ddr s}{ d M_2(\overline{\nu})-d \int_{0}^{+\infty} s \, U_{\overline{\nu}}^{-1}(d U_{\overline{\nu}}(s))  \overline{\nu} (s) \, \ddr s}.
\]
\end{proposition}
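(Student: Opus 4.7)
The plan is to leverage the convex combination structure of an additive CRV's L\'evy measure and then apply the ready-made formulas of Corollary~\ref{th:independece} and Proposition~\ref{th:wass_diagonal}.

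First I would identify the L\'evy measure of $\crv$. An additive CRV of parameter $z$ is, by definition (cf.\ Lemma~1 in the main manuscript and the proof of Lemma~\ref{th:independent_levy}), the sum of an independent CRV weighted by $(1-z)$ and a comonotonic CRV weighted by $z$ (with matching base measure $\alpha$), so its L\'evy measure is $\nu_z = (1-z)\nu^\perp + z\nu\co$. The marginals are still $\overline{\nu}$ since both $\nu^\perp$ and $\nu\co$ have marginals $\overline{\nu}$ and the marginal operator is linear. Next I would compute $\nu^+_z = \Sigma_\#\nu_z = (1-z)\Sigma_\# \nu^\perp + z\Sigma_\# \nu\co$. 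The first push-forward equals $d\overline{\nu}$ because $\nu^\perp$ is supported on the axes (on the $i$-th axis $\Sigma$ is just the $i$-th coordinate), and the second equals the push-forward of $\overline{\nu}$ by the scaling $s\mapsto ds$, since $\nu\co = \Delta_\#\overline{\nu}$ and $\Sigma\circ\Delta(s)=ds$. Taking tail integrals and densities reproduces exactly the formulas for $U_{\nu^+_z}$ and $\nu^+_z$ stated just before the proposition, and in particular $\nu^+_z$ is absolutely continuous with respect to Lebesgue measure as soon as $\overline{\nu}$ is.

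Having $\nu^+_z$ in hand, I would apply Proposition~\ref{th:wass_diagonal} and perform the change of variable $s = U^{-1}_{\nu^+_z}(t)$ precisely as in the proof of Corollary~\ref{th:independece}, which is valid thanks to the absolute continuity of $\nu^+_z$. This yields
\[
\tfrac{1}{2}\W_*(\nu_z,\nu\co)^2 = d M_2(\overline{\nu}) - \int_0^{+\infty} s\, U^{-1}_{\overline{\nu}}(U_{\nu^+_z}(s))\,\nu^+_z(s)\,\ddr s,
\]
which is the numerator of the stated expression. The denominator is $\tfrac12\W_*(\nu^\perp,\nu\co)^2$ obtained directly from Corollary~\ref{th:independece}, and plugging these into the definition $I_\W(\crv) = 1 - \W_*(\nu_z,\nu\co)^2/\W_*(\nu^\perp,\nu\co)^2$ gives the displayed identity.

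For the lower bound $I_\W(\crv)\ge z$, I would argue by producing an admissible (non-optimal) coupling between $\nu_z$ and $\nu\co$. Let $\gamma^\perp \in \overline{\Gamma}(\nu^\perp,\nu\co)$ be the optimal coupling from Proposition~\ref{th:existence_coupling}, and let $\gamma\co = (\textup{id},\textup{id})_\#\nu\co \in \overline{\Gamma}(\nu\co,\nu\co)$ be the trivial diagonal coupling whose transport cost is zero. By linearity of the marginal constraints, $(1-z)\gamma^\perp + z\gamma\co \in \overline{\Gamma}(\nu_z,\nu\co)$, and therefore
\[
\W_*(\nu_z,\nu\co)^2 \le (1-z)\W_*(\nu^\perp,\nu\co)^2 + z\cdot 0 = (1-z)\W_*(\nu^\perp,\nu\co)^2,
\]
so $I_\W(\crv) = 1 - \W_*(\nu_z,\nu\co)^2/\W_*(\nu^\perp,\nu\co)^2 \ge z$.

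The main obstacle is purely book-keeping: verifying that the convex combination $(1-z)\gamma^\perp + z\gamma\co$ indeed lies in $\overline{\Gamma}(\nu_z,\nu\co)$ (the second marginal is $(1-z)\nu\co + z\nu\co = \nu\co$, so there is no issue) and that the change of variable of Corollary~\ref{th:independece} applies to $\nu^+_z$, which in turn reduces to checking its atomlessness — established by the explicit density formula above. No delicate optimality arguments beyond those already encapsulated in Proposition~\ref{th:wass_diagonal} and Corollary~\ref{th:independece} are required.
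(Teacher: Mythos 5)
Your proposal is correct and takes essentially the same route as the paper: identify $\nu_z=(1-z)\nu^\perp+z\nu\co$, compute $U_{\nu^+_z}$ and $\nu^+_z$ by pushing forward under $\Sigma$, plug into the formula from Proposition~\ref{th:wass_diagonal} via the change of variable of Corollary~\ref{th:independece}, and bound from above by a coupling that transports the comonotonic part at zero cost. Your lower-bound step is actually slightly more explicit than the paper's --- you construct the admissible coupling $(1-z)\gamma^\perp+z(\textup{id},\textup{id})_\#\nu\co$ directly rather than appealing informally to ``restricting to maps that act as the identity on the bisecting line'' --- but the underlying idea and the use of positive homogeneity of the squared cost are identical.
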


Proposition~\ref{th:additive} provides the exact expression of the index of dependence and shows that it is always larger than the parameter $z$, for any choice of L\'evy measure $\bar \nu$ and dimension $d$. When restricting to two groups of observations, $z = \textup{cor}(\tilde \mu_1(A), \tilde \mu_2(A))$, which does not depend on $A \in \mathcal{X}$. One proves this by using Proposition~\ref{th:additive_levy} and Campbell's theorem (see, e.g., (9.5.2) in \citet{DaleyVere2007}), so that
\begin{multline*}
\textup{cov}(\tilde \mu_1(A), \tilde \mu_1(A)) = \alpha(A) \int_0^{+\infty}\!\!\int_0^{+\infty}\!\! s_1 s_2 \,  \ddr \nu(s_1,s_2) = z  \alpha(A) M_2(\bar \nu) \\ = z \, \textup{var}(\mu_1(A))^{\frac{1}{2}} \textup{var}(\mu_1(A))^{\frac{1}{2}}.
\end{multline*}
Thus when $d=2$, Proposition~\ref{th:additive} guarantees that $I_\W(\crv) \ge \textup{cor}(\tilde \mu_1(A), \tilde \mu_2(A))$. In Figure~\ref{fig:data} (left) we plot the value of $I_\W(\crv)$ when the marginal is a gamma random measure and we see that the lower bound appears to be tight. This is a very desirable property of the index, since correlation is the most well-established measure of linear dependence between two random variables and additive CRVs introduce dependence linearly at the level of the L\'evy measures. 

\begin{figure}
\begin{center}
\begin{tabular}{cc}
\begin{tikzpicture}
\begin{axis}[
scale only axis=true,
width=0.37\textwidth,
height=0.25\textwidth,
xlabel={$z$ \phantom{$\phi$}},
ylabel={$I_\W(\crv)$},
xmin=0, xmax=1,
ymin = 0, ymax = 1,
legend style={nodes={scale=0.7, transform shape}}, legend pos=south east]
\addplot[color=black, line width = 1pt, dashed] table [x=x, y=y1]{data_additive.txt};
\addlegendentry{$z$}

\addplot[color=blue, line width = 1pt] table [x=x, y=y2]{data_additive.txt};
\addlegendentry{$d=2$}

\addplot[color=red, line width = 1pt] table [x=x, y=y3]{data_additive.txt};
\addlegendentry{$d=3$}

\addplot[color=green!50!black!50!, line width = 1pt] table [x=x, y=y4]{data_additive.txt};
\addlegendentry{$d=4$}
\end{axis}
\end{tikzpicture}  
	& 
\begin{tikzpicture}
\begin{axis}[
scale only axis=true,
width=0.35\textwidth,
height=0.25\textwidth,
xlabel={$\phi$},
ylabel={$I_\W(\crv)$},
xmin=0, xmax=5,
ymin = 0, ymax = 1,
legend style={nodes={scale=0.7, transform shape}}, legend pos=south east]
\addplot[color=blue, line width = 1pt] table [x=x, y=y1]{data_compound.txt};
\addlegendentry{$d=2$}

\addplot[color=red, line width = 1pt] table [x=x, y=y2]{data_compound.txt};
\addlegendentry{$d=3$}

\addplot[color=green!50!black!50!, line width = 1pt] table [x=x, y=y3]{data_compound.txt};
\addlegendentry{$d=4$}

\end{axis}
\end{tikzpicture}  	
\end{tabular}
\caption{Left: $I_\W(\crv)$ for $\crv$ additive gamma CRV of parameter $z$ and dimension $d$. Right: $I_\W(\crv)$ for $\crv$ gamma compound random vector of parameter $\phi$ and dimension $d$.}
\label{fig:data}
\end{center}
\end{figure}

\subsection{Compound random measures} 
Compound random measures \citep{GriffinLeisen2017, GriffinLeisen2018, RivaPalacio2021} provide a flexible way to model dependence between different families of completely random measures.
A CRV $\crv$ is a {\em compound random vector} if its L\'evy density takes the form
\[
\nu (\bm s) = \int_{(0,+\infty)} \frac{1}{u^d} h\bigg(\frac{s_1}{u},\dots,\frac{s_d}{u} \bigg) \ddr \nu^*(u),
\]
where $h$ is a density function on $(0,+\infty)^d$ and $\nu^*$ is a L\'evy measure. A widely used specification takes $h$ the density of $d$ independent $\textup{gamma}(\phi,1)$ random variables and $\nu^*(u) = (1-u)^{\phi -1}u^{-1} \mathbbm{1}_{(0,1)}(u)$, for $\phi>0$. The marginals $\tilde \mu_i$ are then gamma completely random measures and that $\phi$ only accounts for dependence. Under these specifications $\crv$ is a gamma compound random vector of parameter $\phi$. Let
\[ U_{\nu^+_\phi}(s) = \frac{1}{\Gamma(d \phi)} \int_0^1 \Gamma\bigg(d \phi, \frac{s}{u}\bigg) \frac{(1-u)^{\phi-1}}{u} \, \ddr u, \qquad \nu^+_\phi(s) = \frac{s^{d \phi - 1}}{\Gamma(d \phi)} \int_0^1 e^{-\frac{s}{u}} \frac{(1-u)^{\phi-1}}{u^{d \phi +1}} \ddr u,
\]
where $\Gamma(a, s) = \int_s^{+\infty} e^{-t} \, t^{a-1} dt$ is the upper incomplete gamma function. Moreover, we indicate by $E_1(s) = \Gamma(0, s)$ the exponential integral and by $E_1^{-1}$ its inverse function.

\begin{proposition}
\label{th:compound}
Let $\crv$ be a $d$-dimensional gamma compound random vector of parameter $\phi$. Then,
\[
I_\W(\crv) = 1 - \frac{d- \int_{0}^{+\infty} s \, E_1^{-1}(U_{\nu^+_\phi}(s)) \, \nu^+_\phi(s) \,  \ddr s}{ d-d \int_{0}^{+\infty} E_1^{-1}(d E_1(s)) e^{-s} \,  \ddr s} .
\]
\end{proposition}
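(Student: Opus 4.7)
The plan is to apply the general formula for the index obtained by combining Proposition~\ref{th:wass_diagonal} and Corollary~\ref{th:independece}. Writing
\[
I_\W(\crv) = 1 - \frac{\W_*(\nu,\nu\co)^2}{\W_*(\nu^\perp,\nu\co)^2},
\]
the proof reduces to identifying explicitly the one-dimensional marginal L\'evy measure $\overline{\nu}$ and the sum marginal $\nu^+ = \Sigma_\#\nu$, and then substituting them in the two formulas above.

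First I would record that each one-dimensional marginal of a gamma compound random vector is a gamma completely random measure, so $\overline{\nu}(\ddr s) = s^{-1} e^{-s}\,\ddr s$. This gives $U_{\overline{\nu}}(s) = E_1(s)$ (the exponential integral), $M_2(\overline{\nu}) = \int_0^{+\infty} s\, e^{-s}\,\ddr s = 1$, and $U_{\overline{\nu}}^{-1} = E_1^{-1}$. Plugging these into Corollary~\ref{th:independece} yields
\[
\W_*(\nu^\perp,\nu\co)^2 = 2d - 2d\int_0^{+\infty} E_1^{-1}(d E_1(s))\, e^{-s}\,\ddr s,
\]
which, after cancellation of the common factor $2$, is exactly the denominator in the statement.

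The main step is to identify $\nu^+$ for a gamma compound random vector of parameter $\phi$. I would parse the compound construction as expressing the jumps through a Beta-mixed gamma jump-size whose total is, conditionally on a parameter $u \in (0,1)$, distributed as $\mathrm{Gamma}(d\phi, u)$, with improper mixing weight $(1-u)^{\phi-1}/u\,\ddr u$. Integrating out $u$ gives the claimed
\[
\nu^+_\phi(s) = \frac{s^{d\phi - 1}}{\Gamma(d\phi)}\int_0^1 e^{-s/u}\,\frac{(1-u)^{\phi-1}}{u^{d\phi+1}}\,\ddr u,
\]
and integrating the corresponding Gamma$(d\phi,u)$ tails yields the expression of $U_{\nu^+_\phi}$ in terms of upper incomplete gammas. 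Since $\nu^+_\phi$ is absolutely continuous, the change-of-variable identity \eqref{eq:wass_atomless} inside the proof of Corollary~\ref{th:independece} applies to Proposition~\ref{th:wass_diagonal} and produces
\[
\W_*(\nu,\nu\co)^2 = 2d - 2\int_0^{+\infty} s\, E_1^{-1}(U_{\nu^+_\phi}(s))\, \nu^+_\phi(s)\,\ddr s,
\]
which is the numerator in the statement (up to the evident typo $a \leftrightarrow s$).

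The main obstacle is the identification of $\nu^+_\phi$: one has to unpack the gamma compound construction carefully to justify the Beta-mixture-of-gammas representation of $\Sigma_\# \nu$ and verify that the improper mixing weight produces a well-defined L\'evy measure whose density and tail are exactly $\nu^+_\phi$ and $U_{\nu^+_\phi}$ as defined above the Proposition. Once this is established, atomlessness of $\nu^+_\phi$ is immediate from the explicit density and the rest of the argument is a direct substitution into Proposition~\ref{th:wass_diagonal} and Corollary~\ref{th:independece}, followed by assembling numerator and denominator into the ratio defining $I_\W(\crv)$.
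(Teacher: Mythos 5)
Your proposal is correct and follows essentially the same route as the paper. Both arguments compute the denominator from Corollary~\ref{th:independece} using $M_2(\overline{\nu})=1$ and $U_{\overline{\nu}}=E_1$, both apply \eqref{eq:wass_atomless} for the numerator after noting $\nu^+_\phi$ is atomless, and both identify $\nu^+_\phi$ by recognizing that, conditional on the mixing variable $u$, the sum of the $d$ coordinates is gamma-distributed with shape $d\phi$ — the paper does this via the change of variables $\bm{v}=\bm{s}/u$ and the closure of gammas under convolution, while you phrase the same computation as a Beta-improperly-mixed gamma, which is equivalent. You also correctly flag the $a \leftrightarrow s$ typo in the statement.
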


We use Proposition~\ref{th:compound} to analyze the dependence structure induced by gamma compound random measures, as in Figure~\ref{fig:data} (right). In particular, we observe that large values of $\phi$ favour highly dependent completely random measures and already with $\phi=1$, $I_\W$ is slightly larger than 0.5. Finally, we observe that for both classes of models the dependence increases with the dimension $d$. Hence, one should take the dimension into account when fixing a value or an hyperprior for $z$ and $\phi$. This is an example of the use of the index for an informed prior specification of the dependence structure in presence of an arbitrary number of groups of observations.

\subsection{Comonotone replicates of independent components}

Additive and compound random measures are symmetric laws for a CRV $\crv = (\crm^1,\dots, \crm^d)$, in the sense that $\crm^1, \dots, \crm^d$ are exchangeable. This implies, for example, that all pairs of random measures have the same distribution and thus dependence structure. When this condition is not met, our index provides a valuable quantification of the overall dependence, which can not be grasped with pairwise comparisons. A  prototype situation is the one of a $d$-dimensional CRV with $m$ independent components and $n$ comonotone replicates each, that is, 
\begin{equation}
\label{def:crv_mix}
\crv = (\overbrace{\crm^1,\dots, \crm^1}^{n}, \overbrace{\crm^2,\dots, \crm^2}^{n}, \dots, \overbrace{\crm^m,\dots, \crm^m}^{n}),
\end{equation}
where $\crv^\perp = (\crm^1, \crm^2, \dots, \crm^m) \in \M_2(\Omega_m)$ is an independent CRV with equal marginals $\bar{\nu}$. 

\begin{proposition}
\label{th:mix}
Let $\crv$ be a $d$-dimensional CRV as in \eqref{def:crv_mix}, with $m$ independent components and $n$ comonotone replicates such that $M_2(\overline{\nu}) = \int_{\Omega_1} s^2 \ddr \overline{\nu}(s) < +\infty$. Then,
\[
I_\W(\crv) = 1 - \frac{M_2(\overline{\nu})- \int_{0}^{+\infty} s \, U_{\overline{\nu}}^{-1} (m U_{\bar \nu}(s)) \bar \nu(s) \,  \ddr s}{ M_2(\overline{\nu})-  \int_{0}^{+\infty} s \, U_{\overline{\nu}}^{-1}(d U_{\overline{\nu}}(s))  \overline{\nu} (s) \, \ddr s}.
\]
\end{proposition}

The previous result has the merit of  reducing the evaluation of the Wasserstein distance to a 1-dimensional integral, and this quantity may be easily evaluated numerically also in presence of a large number of groups. Moreover, its analytical expression can be studied as the number of groups $d=nm$ diverges, which can happen if the number either of independent components or of comonotone replicates increases. 

\begin{proposition}
\label{th:asymp}
Let $\crv$ be a $d$-dimensional completely random vector as in \eqref{def:crv_mix} with $m$ independent components and $n$ comonotone replicates such that $M_2(\overline{\nu}) = \int_{\Omega_1} s^2 \ddr \overline{\nu}(s) < +\infty$. If $n$ is fixed and $m \rightarrow +\infty$,
\[
I_\W(\crv) \rightarrow 0
\]
monotonically from above. If $m$ is fixed and $n \rightarrow +\infty$,
\[
I_\W(\crv) \rightarrow \frac{\int_{0}^{+\infty} s \, U_{\overline{\nu}}^{-1} (m U_{\bar \nu}(s)) \bar \nu(s) \,  \ddr s}{M_2(\bar \nu)}
\]
monotonically from below.
\end{proposition}

Proposition~\ref{th:asymp} shows that if the number of independent component diverges, the index goes to zero, whereas if the number of comonotone replicates diverges, the index increases and converges to a quantity that depends on the number $m$ of independent components. In particular, if $m=1$ the index is equal to $1$ since in such case,
\[
\int_{0}^{+\infty} s \, U_{\overline{\nu}}^{-1} (m U_{\bar \nu}(s)) \bar \nu(s) \,  \ddr s = \int_{0}^{+\infty} s^2 \bar \nu(s) \,  \ddr s = M_2(\bar \nu).
\]
These limiting behaviors confirm what one would intuitively expect and actually provide further evidence of the principled nature of the proposed index of dependence. Figure~\ref{fig:mix} further illustrates our findings by specializing them to the case of gamma marginals.

\begin{figure}
\begin{center}
\begin{tabular}{cc}
\begin{tikzpicture}
\begin{axis}[
scale only axis=true,
width=0.36\textwidth,
height=0.25\textwidth,
xlabel={$m$},
ylabel={$I_\W(\crv)$},
xmin=1.8, xmax=10.2,
ymin = 0, ymax = 1,
legend style={nodes={scale=0.7, transform shape}}, legend pos=north east]
\addplot[mark = *, color=blue, line width = 1pt] table [x=x, y=index1]{mix_nfix.txt};
\addlegendentry{$n=1$\phantom{0}}

\addplot[mark = *,color=red, line width = 1pt] table [x=x, y=index2]{mix_nfix.txt};
\addlegendentry{$n=2$\phantom{0}}

\addplot[mark = *, color=green!50!black!50!, line width = 1pt] table [x=x, y=index3]{mix_nfix.txt};
\addlegendentry{$n=5$\phantom{0}}

\addplot[mark = *, color=orange, line width = 1pt] table [x=x, y=index4]{mix_nfix.txt};
\addlegendentry{$n=10$}

\end{axis}
\end{tikzpicture}  
	& 
\begin{tikzpicture}
\begin{axis}[
scale only axis=true,
width=0.36\textwidth,
height=0.25\textwidth,
xlabel={$n$},
ylabel={$I_\W(\crv)$},
xmin=1.8, xmax=10.2,
ymin = 0, ymax = 1.1,
legend style={nodes={scale=0.7, transform shape}}, legend pos=north west]
\addplot[mark = *, color=blue, line width = 1pt] table [x=x, y=index1]{mix_mfix.txt};
\addlegendentry{$m=1$\phantom{0} }

\addplot[mark = *, color=red, line width = 1pt] table [x=x, y=index2]{mix_mfix.txt};
\addlegendentry{$m=2$\phantom{0} }

\addplot[mark = *, color=green!50!black!50!, line width = 1pt] table [x=x, y=index3]{mix_mfix.txt};
\addlegendentry{$m=5$\phantom{0} }

\addplot[mark = *, color=orange, line width = 1pt] table [x=x, y=index4]{mix_mfix.txt};
\addlegendentry{$m=10$}
\addplot[color=red, line width = 0.7pt, dashed] table [x=x, y=limit2]{mix_mfix.txt};
\addplot[color=green!50!black!50!, line width = 0.7pt, dashed] table [x=x, y=limit3]{mix_mfix.txt};
\addplot[color=orange, line width = 0.7pt, dashed] table [x=x, y=limit4]{mix_mfix.txt};
\end{axis}
\end{tikzpicture}  	
\end{tabular}
\caption{Evaluation of the index for a CRV with $m$ independent components and $n$ comonotone replicates, in the case of gamma marginals. The dashed lines are the limits in Proposition~\ref{th:asymp}.}
\label{fig:mix}
\end{center}
\end{figure}

\section{Model comparison}
\label{sec:model_comparison}

The proposed index has another important merit:
by tuning the prior parameters of different models to achieve the same value of the dependence index, one can design a principled comparison of their inferential properties. While there is a multitude of dependent priors in the literature, a tool for matching their level of dependence a priori was still missing, preventing a fair comparison of their posterior performance under different scenarios. To illustrate this point, in Figure~\ref{fig:comparison}, we compare the dependence structure of an additive gamma completely random vector of parameter $z$ with a gamma compound random vector of parameter $\phi$, when $d=3$. We associate the values of $I_\W$ to the corresponding parameter, $z$ and $\phi$ respectively. For instance, the same level of high dependence, say 0.8, corresponds to setting $z = 0.75$ for the additive model and $\phi = 3$ for the compound random vector. It should be clear that a fair posterior comparison requires to match their a priori strength of dependence. Moreover, note that $I_\W$ grows almost linearly with $z$ and non-linearly with $\phi$. This may provide valuable guidance on the choice of hyperpriors for the parameters in the model: in the first case a uniform prior on $z$ will imply a roughly uniform prior on the dependence structure, whereas with compound random measures standard priors on $[0,+\infty)$, unless carefully parametrized, would implicitly favor highly correlated marginals.\\

\begin{figure}
\begin{center}
\begin{tikzpicture}
\begin{axis}[
scale only axis=true,
width=0.64\textwidth,
height=0.36\textwidth,
xlabel={$I_\W(\crv)$},
ylabel={Parameter},
xmin=0, xmax=1,
ymin = 0, ymax = 5,
legend style={nodes={scale=0.8, transform shape}}, legend pos=north west]
\addplot[color=blue, line width = 1pt] table [x=y2, y=x]{data_compound.txt};
\addlegendentry{Compound}

\addplot[color=red, line width = 1pt] table [x=y3, y=x]{data_additive.txt};
\addlegendentry{Additive\phantom{icc}}

\addplot [domain=0:1, samples=3, color=gray, dashed]{0.75};
\addplot [domain=0:1, samples=3, color=gray, dashed]{3};
\addplot [domain=0:5, samples=3, color=gray, dashed](0.8,x);

\end{axis}
\end{tikzpicture}   
\end{center}
\caption{Parameter  $\phi$ ($z$) of a compound (additive) completely random vector when $d=3$, as $I_\W$ varies.}
\label{fig:comparison}
\end{figure}

We now provide an illustration of how the choice of dependence structure impacts posterior inference, supporting our understanding of these models and highlighting the relevance of dependence matching when performing model comparisons. 
Simulations are performed using the BNPmix package \citep{Corradin2021} in R \citep{r}, the POT package \citep{Flamary2021} in Python \citep{python} and code kindly made available by Riccardo Corradin. \\
A highly popular use of dependent random measures is to model dependent random densities as location-scale Gaussian mixtures over the normalized random measures. These models allow to simultaneously perform joint density estimation and cluster analysis within and across populations; see \cite{Mueller2015, FotiWilliamson2015, Quintana2022} and references therein. For simplicity we focus on the case of two groups of observations, though the analysis can be readily extended to an arbitrary number of groups. Let $(X_{1,j_1})_{j_1 \ge 1}$ and $(X_{2,j_2})_{j_2 \ge 1}$ be two sequences of random variables such that
\[
(X_{1,j_1},X_{2,j_2})| (\tilde \mu_1, \tilde \mu_2) \simiid \tilde f_1 \times \tilde f_2
\]
where $\tilde f_1$ and $\tilde f_2$ are location-scale Gaussian mixtures, that is,
\[
\tilde f_i(y) = \int_{\mathbb{R} \times (0,+\infty)} \mathcal{N}(y; m, \sigma^2) \, \ddr \tilde p_i( m, \sigma^2) 
\]
with $\mathbb{X} = \mathbb{R} \times (0,+\infty)$, $\tilde p_i = \crm_i/\crm_i(\mathbb{X})$ are random probability measures, and $\mathcal{N}(\cdot; m, \sigma^2)$ indicates the density of a Gaussian distribution $\mathcal{N}(m, \sigma^2)$ with mean $m$ and variance $\sigma^2$. \\
We consider two different priors on the vector of dependent random measures $(\crm_1, \crm_2)$, namely an additive and a compound CRV. In both cases we keep the same specification of the marginals random measures, which are equal to gamma completely random measures with a normal-inverse gamma base probability measure, i.e., $\alpha$ has density
\[
\alpha(m,\sigma^2) = \mathcal{N}\bigg(m; m_0, \frac{\sigma^2}{k_0}\bigg) \times \text{InvGamma}(\sigma^2; a_0, b_0),
\]
with hyperparameters $m_0 \in \mathbb{R}$ and $k_0, a_0,b_0>0$. Given the data $\bm{X}_1=(X_{1,1},\dots,X_{1,n_1})$ from the first group and $\bm{X}_2=(X_{2,1},\dots,X_{2,n_2})$ from the second, each specification yields an estimate for the random densities a posteriori, which we informally indicate as $(\tilde f^\text{a}_1,\tilde f^\text{a}_2)|\text{data}$ when we use an additive prior and $(\tilde f^\text{c}_1,\tilde f^\text{c}_2)|\text{data}$ when we use a compound prior. We consider two different scenarios for the data generating process:
\begin{itemize}
\item[a.] The observations in each group are i.i.d. draws 
 from a mixture of two Gaussians and one mixture component is shared between groups, namely, $\bm{X}_1 \simiid 0.5 \, \mathcal{N}(-5, 1) + 0.5 \, \mathcal{N}(0, 1)$ and $\bm{X}_2 \simiid 0.5 \, \mathcal{N}(0, 1) + 0.5 \, \mathcal{N}(5, 1)$.
\item[b.] The observations in each group are  i.i.d. draws  from a Gaussian distribution, but the two Gaussian distributions have different means, namely, $\bm{X}_1 \simiid \mathcal{N}( -1, 2)$ and $\bm{X}_2 \simiid \mathcal{N}(1, 2)$.
\end{itemize}
Since the borrowing of information is particularly useful in presence of unbalanced groups of observations, for both scenarios the first group has many more observations ($n_1 \in \{100,200\}$) than the second ($n_2 = 10$). Our analysis proceeds as follows: i) We consider three different values of the index, namely $I_\W(\bm{\crm}) \in \{ 0.1, 0.5, 0.9\}$, which correspond to the situation of almost independence, intermediate dependence and almost exchangeability, respectively. ii) We find the hyperparameters for the additive CRV and the compound random measures matching these values of the index, namely $z \in \{ 0.08, 0.44, 0.88\}$ for the former and  $\phi \in \{0.1, 0.85, 8\}$ for the latter. iii) For both scenarios a.~and b., for each value of the index, for both additive and compound random measures, we estimate the densities of the two groups a posteriori. Figure~\ref{fig:posterior_densities_shared} displays the graphical output for scenario a.~while Figure~\ref{fig:posterior_densities_overlapping} for scenario b.
\begin{figure}
\begin{center}
\begin{tabular}{cc}

Additive $I_\W = 0.1$ & Compound $I_\W = 0.1$ \\

\begin{tikzpicture}
\begin{axis}[
scale only axis=true,
width=0.40\textwidth,
height=0.15\textwidth,
xmin=-10, xmax=10,
ymin = 0, ymax = 0.3,
legend style={nodes={scale=0.7, transform shape}}, legend pos=north east]
\addplot[color=blue, line width = 1pt] table [x=x, y=Index1Gr1]{shared_component_additive.txt};
\addlegendentry{Group 1}

\addplot[color=red, line width = 1pt, dashed] table [x=x, y=Index1Gr2]{shared_component_additive.txt};
\addlegendentry{Group 2}

\end{axis}
\end{tikzpicture}  
	& 
\begin{tikzpicture}
\begin{axis}[
scale only axis=true,
width=0.40\textwidth,
height=0.15\textwidth,
xmin=-10, xmax=10,
ymin = 0, ymax = 0.3,
legend style={nodes={scale=0.7, transform shape}}, legend pos=north east]
\addplot[color=blue, line width = 1pt] table [x=x, y=Index1Gr1]{shared_component_compound.txt};
\addlegendentry{Group 1}

\addplot[color=red, line width = 1pt, dashed] table [x=x, y=Index1Gr2]{shared_component_compound.txt};
\addlegendentry{Group 2}

\end{axis}
\end{tikzpicture} \\
\hline 

Additive $I_\W = 0.5$ & Compound $I_\W = 0.5$ \\

\begin{tikzpicture}
\begin{axis}[
scale only axis=true,
width=0.40\textwidth,
height=0.15\textwidth,
xmin=-10, xmax=10,
ymin = 0, ymax = 0.3,
legend style={nodes={scale=0.7, transform shape}}, legend pos=north east]
\addplot[color=blue, line width = 1pt] table [x=x, y=Index2Gr1]{shared_component_additive.txt};
\addlegendentry{Group 1}

\addplot[color=red, line width = 1pt, dashed] table [x=x, y=Index2Gr2]{shared_component_additive.txt};
\addlegendentry{Group 2}

\end{axis}
\end{tikzpicture}  
	& 
\begin{tikzpicture}
\begin{axis}[
scale only axis=true,
width=0.40\textwidth,
height=0.15\textwidth,
xmin=-10, xmax=10,
ymin = 0, ymax = 0.3,
legend style={nodes={scale=0.7, transform shape}}, legend pos=north east]
\addplot[color=blue, line width = 1pt] table [x=x, y=Index2Gr1]{shared_component_compound.txt};
\addlegendentry{Group 1}

\addplot[color=red, line width = 1pt, dashed] table [x=x, y=Index2Gr2]{shared_component_compound.txt};
\addlegendentry{Group 2}

\end{axis}
\end{tikzpicture} \\
\hline 

Additive $I_\W = 0.9$ & Compound $I_\W = 0.9$ \\

\begin{tikzpicture}
\begin{axis}[
scale only axis=true,
width=0.40\textwidth,
height=0.15\textwidth,
xmin=-10, xmax=10,
ymin = 0, ymax = 0.3,
legend style={nodes={scale=0.7, transform shape}}, legend pos=north east]
\addplot[color=blue, line width = 1pt] table [x=x, y=Index3Gr1]{shared_component_additive.txt};
\addlegendentry{Group 1}

\addplot[color=red, line width = 1pt, dashed] table [x=x, y=Index3Gr2]{shared_component_additive.txt};
\addlegendentry{Group 2}

\end{axis}
\end{tikzpicture}  
	& 
\begin{tikzpicture}
\begin{axis}[
scale only axis=true,
width=0.40\textwidth,
height=0.15\textwidth,
xmin=-10, xmax=10,
ymin = 0, ymax = 0.3,
legend style={nodes={scale=0.7, transform shape}}, legend pos=north east]
\addplot[color=blue, line width = 1pt] table [x=x, y=Index3Gr1]{shared_component_compound.txt};
\addlegendentry{Group 1}

\addplot[color=red, line width = 1pt, dashed] table [x=x, y=Index3Gr2]{shared_component_compound.txt};
\addlegendentry{Group 2}

\end{axis}
\end{tikzpicture}

\end{tabular}
\caption{Mean posterior densities with additive random measures (left) and compound random measures (right) for three different values of the index (0.1, 0.5, 0.9, from top to bottom). Both models have gamma marginals with normal-inverse gamma base measure of parameters $m_0=0 , k_0 = 0.1, a_0 = 2, b_0 = 1$. Group $1$ has $n_1=100$ observations and Group $2$ has $n_2=10$ observations, according to scenario a.}
\label{fig:posterior_densities_shared}
\end{center}
\end{figure}
\begin{figure}
\begin{center}
\begin{tabular}{cc}

Additive $I_\W = 0.1$ & Compound $I_\W = 0.1$ \\

\begin{tikzpicture}
\begin{axis}[
scale only axis=true,
width=0.40\textwidth,
height=0.15\textwidth,
xmin=-10, xmax=10,
ymin = 0, ymax = 0.35,
legend style={nodes={scale=0.7, transform shape}}, legend pos=north east]
\addplot[color=blue, line width = 1pt] table [x=x, y=Index1Gr1]{overlapping_additive.txt};
\addlegendentry{Group 1}

\addplot[color=red, line width = 1pt, dashed] table [x=x, y=Index1Gr2]{overlapping_additive.txt};
\addlegendentry{Group 2}

\end{axis}
\end{tikzpicture}  
	& 
\begin{tikzpicture}
\begin{axis}[
scale only axis=true,
width=0.40\textwidth,
height=0.15\textwidth,
xmin=-10, xmax=10,
ymin = 0, ymax = 0.35,
legend style={nodes={scale=0.7, transform shape}}, legend pos=north east]
\addplot[color=blue, line width = 1pt] table [x=x, y=Index1Gr1]{overlapping_compound.txt};
\addlegendentry{Group 1}

\addplot[color=red, line width = 1pt, dashed] table [x=x, y=Index1Gr2]{overlapping_compound.txt};
\addlegendentry{Group 2}

\end{axis}
\end{tikzpicture} \\
\hline 

Additive $I_\W = 0.5$ & Compound $I_\W = 0.5$ \\

\begin{tikzpicture}
\begin{axis}[
scale only axis=true,
width=0.40\textwidth,
height=0.15\textwidth,
xmin=-10, xmax=10,
ymin = 0, ymax = 0.35,
legend style={nodes={scale=0.7, transform shape}}, legend pos=north east]
\addplot[color=blue, line width = 1pt] table [x=x, y=Index2Gr1]{overlapping_additive.txt};
\addlegendentry{Group 1}

\addplot[color=red, line width = 1pt, dashed] table [x=x, y=Index2Gr2]{overlapping_additive.txt};
\addlegendentry{Group 2}

\end{axis}
\end{tikzpicture}  
	& 
\begin{tikzpicture}
\begin{axis}[
scale only axis=true,
width=0.40\textwidth,
height=0.15\textwidth,
xmin=-10, xmax=10,
ymin = 0, ymax = 0.35,
legend style={nodes={scale=0.7, transform shape}}, legend pos=north east]
\addplot[color=blue, line width = 1pt] table [x=x, y=Index2Gr1]{overlapping_compound.txt};
\addlegendentry{Group 1}

\addplot[color=red, line width = 1pt, dashed] table [x=x, y=Index2Gr2]{overlapping_compound.txt};
\addlegendentry{Group 2}

\end{axis}
\end{tikzpicture} \\
\hline 

Additive $I_\W = 0.9$ & Compound $I_\W = 0.9$ \\

\begin{tikzpicture}
\begin{axis}[
scale only axis=true,
width=0.40\textwidth,
height=0.15\textwidth,
xmin=-10, xmax=10,
ymin = 0, ymax = 0.35,
legend style={nodes={scale=0.7, transform shape}}, legend pos=north east]
\addplot[color=blue, line width = 1pt] table [x=x, y=Index3Gr1]{overlapping_additive.txt};
\addlegendentry{Group 1}

\addplot[color=red, line width = 1pt, dashed] table [x=x, y=Index3Gr2]{overlapping_additive.txt};
\addlegendentry{Group 2}

\end{axis}
\end{tikzpicture}  
	& 
\begin{tikzpicture}
\begin{axis}[
scale only axis=true,
width=0.40\textwidth,
height=0.15\textwidth,
xmin=-10, xmax=10,
ymin = 0, ymax = 0.35,
legend style={nodes={scale=0.7, transform shape}}, legend pos=north east]
\addplot[color=blue, line width = 1pt] table [x=x, y=Index3Gr1]{overlapping_compound.txt};
\addlegendentry{Group 1}

\addplot[color=red, line width = 1pt, dashed] table [x=x, y=Index3Gr2]{overlapping_compound.txt};
\addlegendentry{Group 2}

\end{axis}
\end{tikzpicture}

\end{tabular}
\caption{Mean posterior densities with additive random measures (left) and compound random measures (right) for three different values of the index (0.1, 0.5, 0.9, from top to bottom). Both models have gamma marginals with normal-inverse gamma base measure of parameters $m_0=0 , k_0 = 1, a_0 = 2, b_0 = 1$. Group $1$ has $n_1=200$ observations and Group $2$ has $n_2=10$ observations, according to scenario b.}
\label{fig:posterior_densities_overlapping}
\end{center}
\end{figure}
We observe that in both scenarios the densities of the two groups differ the most when close to independence ($I_\W(\bm{\crm}) = 0.1$) and are more similar when close to exchangeability ($I_\W(\bm{\crm})=0.9$). There clearly are 
differences in the estimates between the two nonparametric models
that are due to the specific amount of dependence rather than to the chosen prior.\\ This qualitative intuition can also be confirmed quantitively in the following way. Since for both scenarios a.~and b.~the first group has many more observations than the second, as the index varies we correctly observe a greater impact on the estimation of the second group. For the sake of compactness we thus focus on the estimation of the density of the second group, which is more interesting. We estimate the Wasserstein distance between the two mean posterior densities $\W(\mathbb{E}(\tilde f_{2}^{\text{a}}|\text{data}), \mathbb{E}(\tilde f_{2}^{\text{c}}|\text{data}))$ as the index of dependence varies in both models. The heatmap in Figure~\ref{fig:heatmap1} shows that in both scenarios a.~and b.~the estimates tend to be closer for the same value of the index than for different ones, i.e., the values near the diagonal tend to be smaller than the values far from the diagonal.

\begin{figure}
\begin{center}
\includegraphics[width = 0.49\textwidth, trim=0cm 0cm 0cm 0cm, clip]{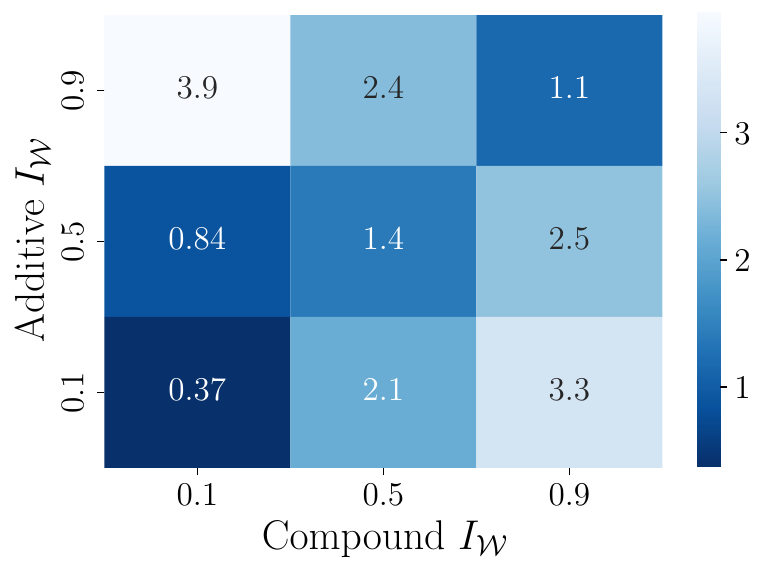}\includegraphics[width = 0.49\textwidth, trim=0cm 0cm 0cm 0cm, clip]{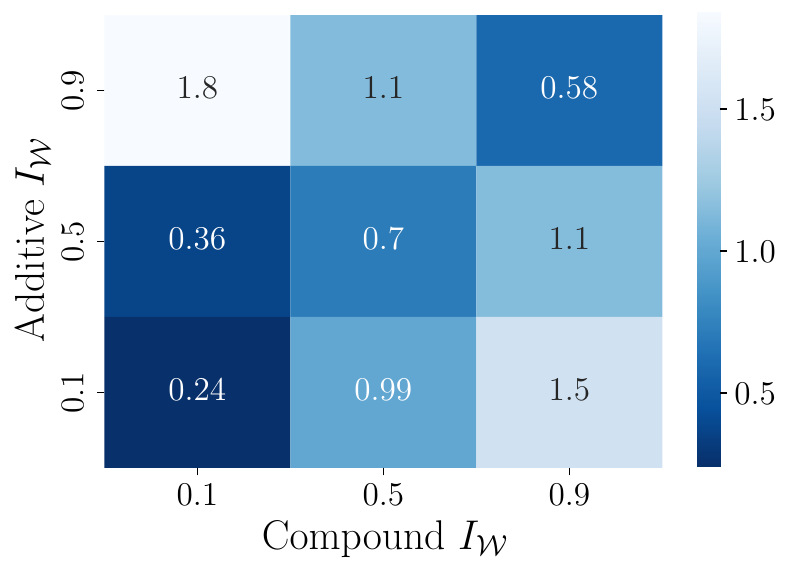}
\caption{Estimated Wasserstein distance $\W(\mathbb{E}(\tilde f_{2}^{\text{a}}|\text{data}), \mathbb{E}(\tilde f_{2}^{\text{c}}|\text{data}))$ between the mean posterior densities of additive and compound random measures for three different values of the index (0.1, 0.5, 0.9). The setups and data are the ones of Figure~\ref{fig:posterior_densities_shared} (left) and of Figure~\ref{fig:posterior_densities_overlapping} (right). }
\label{fig:heatmap1}
\end{center}
\end{figure}

Finally we push the comparison even further and show that the distance between two additive models with different value for the index of dependence is bigger than the distance between an additive and a compound with the same index of dependence. This is pictured in the heatmap in Figure~\ref{fig:heatmap2}, where on the antidiagonal one finds $\W(\mathbb{E}(\tilde f_{2}^{\text{a}}|\text{data}), \mathbb{E}(\tilde f_{2}^{\text{c}}|\text{data}))$ with the same index of dependence, while on the off-antidiagonal one finds $\W(\mathbb{E}(\tilde f_{2}^{\text{a}}|\text{data}), \mathbb{E}(\tilde f_{2}^{\text{a}}|\text{data}))$ with different index of dependence. This is repeated for both scenarios a.~and b. Similar results can be obtained for compound random measures.
\begin{figure}
\begin{center}
\includegraphics[width = 0.49\textwidth, trim=0cm 0cm 0cm 0cm, clip]{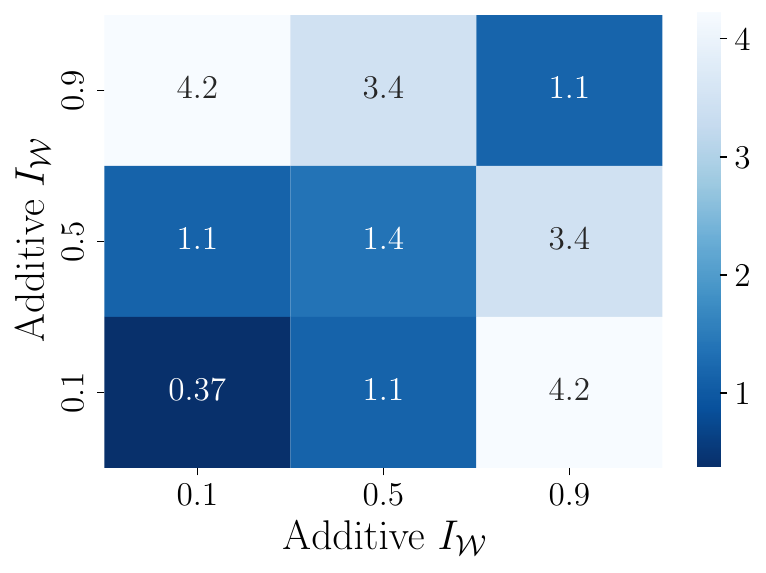}\includegraphics[width = 0.49\textwidth, trim=0cm 0cm 0cm 0cm, clip]{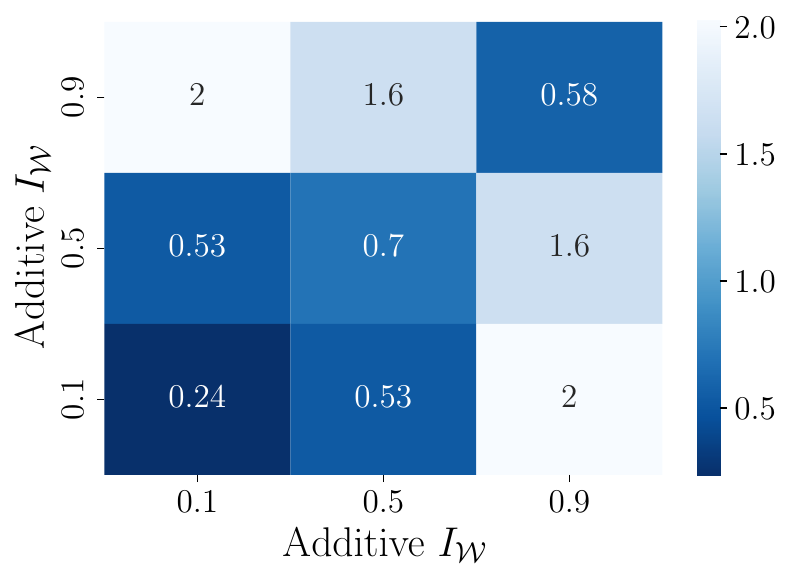}
\caption{Antidiagonal: Estimated Wasserstein distance $\W(\mathbb{E}(\tilde f_{2}^{\text{a}}|\text{data}), \mathbb{E}(\tilde f_{2}^{\text{c}}|\text{data}))$ between the mean posterior densities of the additive and compound models with the same value of the index, taking values in (0.1, 0.5, 0.9).  Off-antidiagonal: Estimated Wasserstein distance $\W(\mathbb{E}(\tilde f_{2}^{\text{a}}|\text{data}), \mathbb{E}(\tilde f_{2}^{\text{a}}|\text{data}))$ between the mean posterior densities of the additive model for different values of the index. The setups are the same as in Figure~\ref{fig:posterior_densities_shared} (left) and in Figure~\ref{fig:posterior_densities_overlapping} (right).}
\label{fig:heatmap2}
\end{center}
\end{figure}

\newpage

\begin{center}
{\large\bf SUPPLEMENTARY MATERIAL}
\end{center}
The Supplementary Material contains our proof techniques and the underlying optimal transport problem, which we believe are of interest beyond the present setup with natural applications to the theory of partial differential equations and of L\'evy processes.

\bigskip
\begin{center}
	{\large\bf ACKNOWLEDGEMENTS}
\end{center}
The authors are grateful to Giuseppe Savar\'e for helpful discussions and to Riccardo Corradin for valuable insights on the simulations in Section~6. Part of this work was carried out while Marta Catalano was affiliated with the Department of Statistics of the University of Warwick.\\
M. Catalano was partially supported by the Heilbronn Institute for Mathematical Research, A. Lijoi and I. Pr\"unster by MIUR, PRIN Project 2022CLTYP4.

\bibliographystyle{Chicago}
\bibliography{paper-ref}
\end{document}